\newtheorem{proposition}{Proposition}[section]
\newtheorem{lemma}[proposition]{Lemma}
\newtheorem{corollary}[proposition]{Corollary}
\newtheorem{theorem}[proposition]{Theorem}
\begin{document}

\title{Growth of products of subsets in finite simple groups}

\author{Daniele Dona}
\address{Alfr\'ed R\'enyi Institute, Re\'altanoda Utca 13-15, H-1053, Budapest, Hungary}
\email{dona.daniele@renyi.hu}

\author{Attila Mar\'oti}
\address{Alfr\'ed R\'enyi Institute, Re\'altanoda Utca 13-15, H-1053, Budapest, Hungary}
\email{maroti.attila@renyi.hu}

\author{L\'aszl\'o Pyber}
\address{Alfr\'ed R\'enyi Institute, Re\'altanoda Utca 13-15, H-1053, Budapest, Hungary}
\email{pyber.laszlo@renyi.hu}

\keywords{Finite simple groups, normal subsets, growth}
\subjclass[2020]{20D06, 20F69}
\thanks{The project leading to this application has received funding from the European Research Council (ERC) under the European Union's Horizon 2020 research and innovation programme (grant agreement No.~741420). The second and third authors were also supported by the National Research, Development and Innovation Office (NKFIH) Grant No.~K138596. The second author was also funded by No.~K132951 and Grant No.~K138828.}

\begin{abstract}
We prove that the product of a subset and a normal subset inside any finite simple non-abelian group $G$ grows rapidly. More precisely, if $A$ and $B$ are two subsets with $B$ normal and neither of them is too large inside $G$, then $|AB| \geq |A||B|^{1-\epsilon}$ where $\epsilon>0$ can be taken arbitrarily small. This is a somewhat surprising strengthening of a theorem of Liebeck, Schul, Shalev.

\end{abstract}
\maketitle

\section{Introduction}

The study of growth of products of subsets in finite simple groups has been the subject of significant work in the recent decades. Part of the interest revolves around a conjecture of Liebeck, Nikolov, and Shalev \cite{LNS}, which claims that for any finite simple non-abelian group $G$ and any set $A\subseteq G$ of size at least $2$ we can write $G$ as the product of $N$ conjugates of $A$ with $N=O(\log|G|/\log|A|)$. This conjecture generalizes an already deep theorem of Liebeck and Shalev \cite{LS}, which proves it for $A$ a \textit{normal} subset, i.e.\ a union of conjugacy classes of $G$.

In attempting to prove the conjecture, or partial cases thereof, a natural way is to show that the product of two subsets has size comparable to the product of the sizes of the two original sets. A result in this vein is the following, due to Gill, Pyber, Short, and Szab\'o \cite[Proposition 5.2]{GPSSz}. For any $\epsilon > 0$ there exists $\delta > 0$ such that if $G$ is a finite simple non-abelian group, $A$ is a subset with $|A|\leq|G|^{1-\delta}$, and $B$ is a normal subset, then $|AB| \geq |A| {|B|}^{\epsilon}$. This theorem strengthens the expansion result given in \cite[Proposition 10.4]{Shalev} for conjugacy classes that are not too large with respect to the size of $G$. Liebeck, Schul, and Shalev later used another result of this kind to prove that for small classes, and indeed for small normal subsets, the expansion is particularly rapid. They proved \cite[Theorem 1.3]{LSS} that for any $\epsilon > 0$ there exists $\delta > 0$ such that if $G$ is a finite simple non-abelian group and $A,B$ are two normal subsets with $|A|,|B|\leq|G|^{\delta}$, then $|AB| \geq (|A| |B|)^{1-\epsilon}$.

In the present paper we prove the following.

\begin{theorem}\label{main}
For any $\epsilon > 0$ there exists $\delta > 0$ such that if $G$ is a finite simple non-abelian group, $A$ is a subset and $B$ is a normal subset with $|A|,|B|\leq|G|^{\delta}$, then $|AB| \geq |A| |B|^{1-\epsilon}$.
\end{theorem}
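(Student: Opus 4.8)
The plan is to recast the statement as a bound on how much a single translate of $A$ can meet the normal set $B$. Since $B$ is normal we have $AB=BA$, and writing $AB=\bigcup_{b\in B}Ab$ the obvious covering inequality gives
\[
|AB|\ \ge\ \frac{|A|\,|B|}{\displaystyle\max_{z\in G}|B\cap Az|},
\]
so it would suffice to show $\max_{z}|B\cap Az|\le|B|^{\epsilon}$. (Note the left‑translation stabiliser $\{g:gB=B\}$ is a normal subgroup of the simple group $G$, hence trivial, so the $|A|$ sets $aB$ are pairwise distinct; the content is controlling their mutual overlaps, which is precisely what the displayed inequality measures.) This reduction cannot be tight over the whole range — when $A$ itself resembles a normal set it loses a factor close to $|B|$ — so it must be complemented by a different argument in part of the range.

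\textbf{The range where $B$ is large compared with $A$.} Suppose $|AB|\le K|A|$. Ruzsa's covering lemma produces $T\subseteq B$ with $|T|\le K$ and $B\subseteq A^{-1}AT$, whence $B^{2}\subseteq (A^{-1}AT)^{2}$ and $|B^{2}|\le (|A|^{2}K)^{2}$. On the other hand $B$ is normal and still small ($|B^{2}|\le|B|^{2}\le|G|^{2\delta}$), so the theorem of Liebeck, Schul and Shalev applied to the pair $(B,B)$ gives $|B^{2}|\ge |B|^{2(1-\epsilon/2)}$. Comparing the two bounds yields $K\ge |B|^{1-\epsilon/2}/|A|^{2}$, which already gives $|AB|\ge |A|\,|B|^{1-\epsilon}$ as soon as $|B|\ge |A|^{C}$ for a constant $C=C(\epsilon)$. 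Hence from now on we may assume $|B|\le|A|^{C}$; in particular $B$ is genuinely small ($|B|\le|G|^{C\delta}$), and $A$ is not tiny compared with $B$.

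\textbf{The range where $B$ is small compared with $A$.} Here I would first reduce to a normal set with usable internal structure: decomposing $B$ into conjugacy classes, grouping them into $O(\log|B|)$ dyadic size‑blocks and keeping the heaviest block, one may replace $B$ by a \emph{balanced} normal set $B^{\ast}$ (a union of classes of roughly equal size) with $|B^{\ast}|\ge|B|^{1-o(1)}$, ideally pushing all the way down to a single conjugacy class. The core estimate is then a bound on $\max_{z}|B^{\ast}\cap Az|$: expanding the class function $1_{B^{\ast}}$ in irreducible characters,
\[
|B^{\ast}\cap Az|\ =\ \frac{|A|\,|B^{\ast}|}{|G|}\ +\ \frac{|B^{\ast}|}{|G|}\sum_{\chi\ne 1}\lambda_{\chi}\,\overline{\sum_{a\in A}\chi(za^{-1})},\qquad |\lambda_{\chi}|\le \max_{c\in B^{\ast}}|\chi(c)| ,
\]
and since every element of $B^{\ast}$ lies in a small conjugacy class one invokes the uniform character‑ratio bounds $|\chi(c)|\le\chi(1)^{1-\kappa}$; splitting the $\chi$‑sum at a degree threshold $D$, the low‑degree part is controlled by the polynomial‑in‑$D$ bound on the number of irreducible characters of degree $\le D$, the high‑degree part by the character‑ratio bound together with the uniform convergence of Witten zeta functions, and one then optimises $D$. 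When this estimate fails it is exactly because a translate of $A$ already contains a large normal‑like chunk of $B^{\ast}$, and in that situation one feeds this chunk back into the Liebeck–Schul–Shalev theorem (and the first displayed inequality) to conclude.

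\textbf{Where the difficulty lies.} The genuinely hard point is the last range, $|B|\le|A|^{C}$ with $|A|$ large: to obtain $|A|$ to the first power — rather than $|A|^{1-\epsilon}$, which is all LSS gives directly — one needs the character estimates to produce a \emph{polynomial‑in-$|B|$} saving, uniformly over all finite simple groups, and the delicate case is the alternating and small‑rank classical groups, where the minimal character degree is small and the crude bound "$\chi(1)\ge m(G)$ for $\chi\ne1$'' is far too weak, so the degree‑by‑degree analysis and the control of low‑degree characters must be done carefully. Interfacing the "$A$ is structured'' sub‑case with LSS, and reducing a general normal $B$ to one with workable internal structure while losing at most a $|B|^{o(1)}$ factor, also seem to require input from the structure theory of small normal subsets of finite simple groups.
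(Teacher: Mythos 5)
Your approach is genuinely different from the paper's, and one half of it works, but the other half has a real gap that the character‑theoretic sketch does not close.

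\textbf{What works.} Your ``$B$ large compared with $A$'' argument is sound: Ruzsa covering gives $B\subseteq A^{-1}AT$ with $|T|\leq K$, hence $|B^{2}|\leq|A|^{4}K^{2}$, while the Liebeck--Schul--Shalev theorem applied to $(B,B)$ gives $|B^{2}|\geq|B|^{2(1-\epsilon/2)}$ provided $\delta$ is chosen small enough for LSS; comparing yields $K\geq|B|^{1-\epsilon/2}/|A|^{2}$, which is $\geq|B|^{1-\epsilon}$ once $|B|\geq|A|^{4/\epsilon}$. This is a correct (and elementary) disposal of that range. Note, however, that you cannot push this further by iterating: from $B\subseteq A^{-1}AT$ one only gets $|B^{k}|\leq|A|^{2k}K^{k}$, so the parasitic $|A|^{2}$ survives division by the $k$-th root for every $k$, and the complementary range $|B|<|A|^{4/\epsilon}$ remains untouched. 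The Ruzsa covering step is the weak link; the paper replaces it by the much sharper \cite[Theorem 4.3]{GPSSz} (Proposition \ref{GPSSz} above), which produces a nonempty $X\subseteq A$ with $|XB^{k}|\leq K^{k}|X|$ for \emph{all} $k$, so $|B^{k}|\leq K^{k}|A|$ with no extra $|A|^{2k}$, and the iteration does close.

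\textbf{Where the gap is.} In the range $|B|\leq|A|^{C}$ you aim for $\max_{z}|B^{\ast}\cap Az|\leq|B^{\ast}|^{\epsilon}$ via character sums, but $A$ is arbitrary, so the inner sum $\sum_{a\in A}\chi(za^{-1})$ admits no orthogonality or cancellation; the only a priori control is Cauchy--Schwarz, and that lands you back at bounds of shape $|A|\chi(1)$, at which point the character‑ratio estimates for small classes do not produce the required polynomial‑in‑$|B|$ saving uniformly. More seriously, the ``escape'' you propose when the overlap estimate fails is circular: if $|B\cap Az|\geq|B|^{1-\epsilon}$, then $A$ contains a translate of a large subset $B'$ of $B$ and one is led to bound $|B'B|$ from below — but $B'$ is a non‑normal subset of $B$, so LSS does not apply to it, and one is facing the original problem again with $B'$ in place of $A$. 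You flag this range as the hard point; it is, and the sketch does not contain a mechanism to break out of the loop.

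\textbf{What the paper does instead.} The paper avoids characters and overlap counts entirely. It proves a new multi‑product version of LSS (Proposition \ref{log}): if $B_{1},\dots,B_{k}$ are normal with $|B_{1}|\cdots|B_{k}|\leq|G|^{\delta}$, then $B_{1}\cdots B_{k}$ contains a class of size $\geq(|B_{1}|\cdots|B_{k}|)^{1-\epsilon}$, with $\delta$ \emph{uniform} in $k$; this is the content of Sections 2--4 and requires the uniform class‑size control $\,|G|^{\Delta(x)(1-\epsilon)}\leq|x^{G}|\leq|G|^{\Delta(x)(1+\epsilon)}\,$ for small classes — iterating LSS na\"ively would compound the $\epsilon$'s as $(1-\epsilon)^{k-1}$ and is not enough. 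Then, choosing $k$ minimal with $|A|\leq|B|^{(\epsilon/2)k}$, Proposition \ref{GPSSz} gives $|B^{k}|\leq K^{k}|B|^{(\epsilon/2)k}$, Proposition \ref{log} gives $|B^{k}|\geq|B|^{(1-\epsilon/2)k}$, and $K\geq|B|^{1-\epsilon}$ falls out. So the two ingredients you would need to make your plan into a proof — a lossless $k$-fold iteration of the doubling hypothesis, and a uniform‑in‑$k$ growth lower bound for $B^{k}$ — are exactly the two propositions the paper supplies, and neither is present in your sketch.
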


Theorem~\ref{main} is a direct generalization of \cite[Theorem 1.3]{LSS}, and it improves \cite[Proposition 5.2]{GPSSz} for sets of size at most $|G|^{\delta}$.

\section{Bounding conjugacy class sizes in alternating groups}

In this section let $G$ be the alternating group of degree $r$ and let $x \in G$. We define $\Delta(x)$ to be $(r-t)/r$ where $t$ denotes the number of cycles in the disjoint cycle decomposition of $x$. The purpose of this section is to show that, unlike the support of $x$, the invariant $\Delta(x)$ controls the size of the conjugacy class $x^{G}$, provided that it is small.   

We will need a variant of \cite[Lemma 2.3]{GaronziMaroti}. 

\begin{lemma}
\label{GaronziMaroti}	
For every $\gamma$ and $\epsilon$ with $0 < \gamma < 1$ and $0 < \epsilon < 1$ there exists $N$ such that for every $r \geq N$, whenever $x \in G$ satisfies $|x^{G}| \geq |G|^{\gamma}$, then $\Delta(x) > (1-\epsilon)\gamma$.
\end{lemma}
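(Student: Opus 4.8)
The plan is to prove the contrapositive: having fixed $\epsilon,\gamma\in(0,1)$, I will show that if $r$ exceeds a bound depending only on $\epsilon$ and $\gamma$ and $\Delta(x)\le(1-\epsilon)\gamma$, then $|x^{G}|<|G|^{\gamma}$. Record the cycle type of $x$ as $(a_{i})_{i\ge 1}$, where $a_{i}$ is the number of $i$-cycles of $x$ (fixed points counting as $1$-cycles), so that $\sum_{i}ia_{i}=r$, the number of cycles is $t=\sum_{i}a_{i}$, and $r\Delta(x)=r-t=\sum_{i}(i-1)a_{i}$. Since $x^{G}\subseteq x^{S_{r}}$ (with $S_{r}$ the symmetric group of degree $r$) and the centralizer of $x$ in $S_{r}$ has order $\prod_{i}i^{a_{i}}a_{i}!$, it suffices to bound $|x^{S_{r}}|=r!/\prod_{i}i^{a_{i}}a_{i}!$ from above, equivalently to bound $\prod_{i}i^{a_{i}}a_{i}!$ from below.

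The crux is the estimate $\log\prod_{i}i^{a_{i}}a_{i}!\ \ge\ (1-\Delta(x))\log r!-Cr$ for an absolute constant $C$ (logarithms natural throughout). Granting it, one gets $|x^{S_{r}}|\le (r!)^{\Delta(x)}e^{Cr}$; plugging in $\Delta(x)\le(1-\epsilon)\gamma$, using $\log|G|=\log r!-\log 2$ and $\log r!\ge r\log r-r$, the target inequality $|x^{G}|<|G|^{\gamma}$ comes down to $Cr+O(1)<\epsilon\gamma\log r!$, which holds once $\epsilon\gamma\log r$ exceeds an absolute constant, i.e.\ for all $r\ge N(\epsilon,\gamma)$ with $N$ explicit (something like $\lceil e^{1+4/(\epsilon\gamma)}\rceil$). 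To prove the estimate, I would first apply $\log(a_{i}!)\ge a_{i}\log a_{i}-a_{i}$ to obtain $\log\prod_{i}i^{a_{i}}a_{i}!\ge\sum_{i}a_{i}\log(ia_{i})-t=t\log r-F-t$, where $F:=\sum_{i}a_{i}\log\frac{r}{ia_{i}}$ (each summand is nonnegative since $ia_{i}\le r$); then $t\log r\ge(1-\Delta(x))\log r!$ because $\log r!\le r\log r$ and $1-\Delta(x)=t/r$, so everything reduces to the bound $F=O(r)$.

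This bound $F=O(r)$ is the one real obstacle. The naive term-by-term estimate $a_{i}\log\frac{r}{ia_{i}}\le \frac{r}{e\,i}$ only gives $F=O(r\log r)$, because it lets every $a_{i}$ sit at its own optimum and ignores the global constraint $\sum_{i}ia_{i}=r$. To use that constraint, I would apply the tangent-line inequality $\log y\le\log c+\tfrac{y}{c}-1$ (valid for all $y,c>0$ by concavity of $\log$) with $y=\frac{r}{ia_{i}}$ and the geometrically decaying choice $c=2^{i}$; multiplying by $a_{i}$ and summing yields $F\le(\log 2)\sum_{i}ia_{i}+r\sum_{i\ge1}\frac{1}{i2^{i}}-t\le 2r\log 2$, since $\sum_{i\ge1}\frac{1}{i2^{i}}=\log 2$. (Optimizing $F$ against $\sum_{i}ia_{i}=r$ by Lagrange multipliers gives the same $O(r)$ with a better constant, but any absolute constant is enough.) The remaining steps are routine Stirling bookkeeping.
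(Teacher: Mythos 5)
Your argument is correct, but it takes a genuinely different route from the paper. The paper's proof is a two-line reduction: it quotes \cite[Lemma 2.3]{GaronziMaroti}, which already gives the additive bound $\Delta(x) > \gamma - \epsilon_{1}$ for $|x^{G}| \geq |G|^{\gamma}$ and $r$ large, and then simply chooses $\epsilon_{1} < \gamma\epsilon$ so that $\gamma - \epsilon_{1} > (1-\epsilon)\gamma$, converting the additive error into the multiplicative one in the statement. You instead reprove the whole estimate from scratch: bounding $|x^{G}|$ by the $S_{r}$-class size $r!/\prod_{i} i^{a_{i}}a_{i}!$, using $\log a_{i}! \geq a_{i}\log a_{i} - a_{i}$ and $t\log r \geq (1-\Delta(x))\log r!$, and then handling the genuinely nontrivial term $F = \sum_{i} a_{i}\log\frac{r}{ia_{i}}$ via the tangent-line inequality $\log y \leq \log c + y/c - 1$ with $c = 2^{i}$, which correctly exploits the constraint $\sum_{i} i a_{i} = r$ to get $F \leq 2r\log 2$ (the naive optimum over each $a_{i}$ separately would indeed only give $O(r\log r)$, as you note). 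All the steps check out, and the resulting constant $C$ is absolute, so the Stirling bookkeeping does yield an explicit $N(\epsilon,\gamma)$. What your approach buys is self-containedness (you in effect reprove the cited Garonzi--Mar\'oti lemma, in its contrapositive form, with an explicit threshold), at the cost of a page of computation; what the paper's approach buys is brevity, since the hard analytic content is outsourced to the reference.
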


\begin{proof}
Fix $\gamma$ and $\epsilon$ with $0 < \gamma < 1$ and $0 < \epsilon < 1$. According to \cite[Lemma 2.3]{GaronziMaroti}, for every $\epsilon_{1} > 0$ there exists $N_1$ such that for every $r \geq N_{1}$, whenever $x \in G$ satisfies $|x^{G}| \geq |G|^{\gamma}$, then $\Delta(x) > \gamma - \epsilon_{1}$. It is sufficient to choose $\epsilon_{1}$ such that $\gamma - \epsilon_{1} > (1 - \epsilon) \gamma$. This is the case when $\epsilon_{1} < \gamma \epsilon$. 
\end{proof}

We need the following bounds of Stirling found in \cite[2.9]{Feller}. 

\begin{lemma}
	\label{Stirling}	
	For every positive integer $n$ we have 
	$$\sqrt{2 \pi n} {\Big( \frac{n}{e} \Big)}^{n} \leq n! \leq 2 \sqrt{2 \pi n} {\Big( \frac{n}{e} \Big)}^{n}.$$ 
\end{lemma}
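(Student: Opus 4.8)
The statement is classical, so the plan is to reconstruct the standard argument, which controls the error term in the expansion of $\log n!$ tightly enough to produce the factor $2$. I would set $a_n = \log n! - \bigl(n + \tfrac12\bigr)\log n + n$, which is the logarithm of the ratio $n!\,/\,\bigl(\sqrt{n}\,(n/e)^{n}\bigr)$, and aim to prove that $a_n$ decreases to a limit $C$ and satisfies $C \le a_n \le C + \tfrac{1}{12n}$ for every $n \ge 1$. This yields
\[
e^{C}\sqrt{n}\,(n/e)^{n} \le n! \le e^{C + 1/(12n)}\sqrt{n}\,(n/e)^{n},
\]
and once $e^{C} = \sqrt{2\pi}$ is established the left inequality is the lower bound of the lemma, while the right one gives the upper bound since $e^{1/(12n)} \le e^{1/12} < 2$.

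For the monotonicity and the error estimate I would compute $a_n - a_{n+1} = \bigl(n + \tfrac12\bigr)\log\tfrac{n+1}{n} - 1$ and write $\tfrac{n+1}{n} = \tfrac{1+t}{1-t}$ with $t = \tfrac{1}{2n+1}$, so that $\log\tfrac{1+t}{1-t} = 2\sum_{j\ge0}\tfrac{t^{2j+1}}{2j+1}$; using $n + \tfrac12 = \tfrac{1}{2t}$ this collapses to
\[
a_n - a_{n+1} = \sum_{j\ge1}\frac{t^{2j}}{2j+1}.
\]
The right-hand side is positive, so $a_n$ strictly decreases; bounding it above by $\tfrac13\cdot\tfrac{t^{2}}{1-t^{2}} = \tfrac{1}{12n(n+1)} = \tfrac{1}{12n} - \tfrac{1}{12(n+1)}$ shows that $a_n - \tfrac{1}{12n}$ strictly increases. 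Hence both sequences converge to a common limit $C$ and $C \le a_n \le C + \tfrac{1}{12n}$ holds for all $n \ge 1$.

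It remains to show $e^{C} = \sqrt{2\pi}$, and this evaluation of the constant is the one step that is not a routine one-variable estimate. I would invoke Wallis's product in the form $\tfrac{\pi}{2} = \lim_{n\to\infty}\tfrac{2^{4n}(n!)^{4}}{(2n+1)\,((2n)!)^{2}}$, which follows from the recursion for $\int_0^{\pi/2}\sin^m x\, dx$; substituting $n! = e^{a_n}\sqrt{n}\,(n/e)^{n}$ and $(2n)! = e^{a_{2n}}\sqrt{2n}\,(2n/e)^{2n}$ and letting $n\to\infty$ (so $a_n, a_{2n}\to C$) makes the expression collapse to $e^{2C}/4$, whence $e^{2C}/4 = \pi/2$ and $e^{C} = \sqrt{2\pi}$. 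The main obstacle is precisely pinning down this constant via Wallis's formula; everything else reduces to the elementary estimate for $\log\tfrac{1+t}{1-t}$ above.
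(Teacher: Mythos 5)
The paper does not give a proof of this lemma at all: it is stated with a citation to Feller's textbook, \cite[2.9]{Feller}, and nothing more. Your argument is a correct self-contained reconstruction of the classical Stirling bound, and it is in fact essentially the same argument that appears in Feller: set $a_n = \log n! - (n+\tfrac12)\log n + n$, show via the series for $\log\tfrac{1+t}{1-t}$ with $t = \tfrac{1}{2n+1}$ that $a_n$ decreases while $a_n - \tfrac{1}{12n}$ increases, conclude $C \le a_n \le C + \tfrac{1}{12n}$, and evaluate $e^{C} = \sqrt{2\pi}$ by Wallis's product. All the computations check out, including the telescoping identity $\tfrac{1}{12n(n+1)} = \tfrac{1}{12n} - \tfrac{1}{12(n+1)}$ and the collapse of the Wallis quotient to $e^{2C}/4$. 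The final step $e^{1/(12n)} \le e^{1/12} < 2$ correctly delivers the (quite generous) factor $2$ in the upper bound of the lemma. Since the paper only needed the inequality and delegated the proof to a reference, there is no competing approach to compare against; you have simply supplied the omitted proof by the standard route.
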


We are now in position to prove the main result of this section.

\begin{proposition}
\label{classsize1}	
For all $\epsilon > 0$ there exists $\delta > 0$ such that whenever $G$ is an alternating group and $x \in G$ with $|x^{G}| \leq |G|^{\delta}$, then 
$$|G|^{\Delta(x)(1- \epsilon)} \leq |x^{G}| \leq |G|^{\Delta(x)(1+ \epsilon)}.$$
\end{proposition}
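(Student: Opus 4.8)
The plan is to reduce to the symmetric group and then prove the two inequalities separately — the upper bound as a quick consequence of Lemma~\ref{GaronziMaroti}, the lower bound by a direct computation with Lemma~\ref{Stirling}. Since $C_{A_r}(x)$ has index $1$ or $2$ in $C_{S_r}(x)$, the class sizes $|x^{A_r}|$ and $|x^{S_r}|$ agree up to a factor of $2$; as $|G|$ can be made as large as we wish once $r$ exceeds an absolute constant, that factor is absorbed into an $\epsilon$, so it suffices to estimate $|x^{S_r}| = r!/\prod_i i^{a_i}a_i!$, where $a_i$ is the number of $i$-cycles of $x$. Write $d := r-t = \sum_{i\ge2}(i-1)a_i$, so $\Delta(x)=d/r$, and let $s := \sum_{i\ge2} i a_i$ and $c := \sum_{i\ge2} a_i$ be the support and the number of nontrivial cycles, so that $s=d+c$ and $d\le s\le 2d$; the case $x=e$ is trivial, so assume $x\ne e$.

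For the upper bound $|x^G|\le|G|^{\Delta(x)(1+\epsilon)}$ I would simply apply Lemma~\ref{GaronziMaroti}. Set $\gamma := \log_{|G|}|x^G|\in(0,\delta]$ and take the auxiliary parameter $\epsilon_0 = \epsilon/(1+\epsilon)$, so that $(1-\epsilon_0)^{-1}\le 1+\epsilon$. Since $|x^G|=|G|^{\gamma}\ge|G|^{\gamma}$, Lemma~\ref{GaronziMaroti} gives $\Delta(x)>(1-\epsilon_0)\gamma$, whence $\gamma<(1+\epsilon)\Delta(x)$, i.e.\ $|x^G|<|G|^{\Delta(x)(1+\epsilon)}$. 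The one subtlety is that the degree threshold supplied by Lemma~\ref{GaronziMaroti} depends on $\gamma$; this is handled by first shrinking $\delta$ enough that for all $r$ below the relevant thresholds the hypothesis $|x^G|\le|G|^\delta$ already forces $x=e$ — using that in $A_r$ the smallest nontrivial conjugacy class has size growing with $r$ — so that only a controlled range of $\gamma$ remains.

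For the lower bound $|x^G|\ge|G|^{\Delta(x)(1-\epsilon)}$, Lemma~\ref{GaronziMaroti} gives nothing and I would estimate directly. Writing $|x^{S_r}| = (r!/a_1!)\big/\prod_{i\ge2}i^{a_i}a_i!$, the numerator $r!/a_1! = \prod_{j=a_1+1}^{r}j$ is the product of the $s$ largest factors of $r!$, so $\ln(r!/a_1!)\ge \frac{s}{r}\ln r!\ge\frac{d}{r}\ln r!\ge\Delta(x)\ln|G|$. The denominator is bounded via $\ln\!\bigl(\prod_{i\ge2}i^{a_i}a_i!\bigr)=\sum_{i\ge2}a_i\ln i+\sum_{i\ge2}\ln a_i!$, where concavity of $\log$ gives $\sum_{i\ge2}a_i\ln i\le c\ln(s/c)$, while $\prod_{i\ge2}a_i!\le c!$ and Lemma~\ref{Stirling} give $\sum_{i\ge2}\ln a_i!\le c\ln c - c + O(\ln c)$; summing yields $\ln\!\bigl(\prod_{i\ge2}i^{a_i}a_i!\bigr)\le c\ln(s/e)+O(\ln c)$. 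Hence $\ln|x^G|\ge\Delta(x)\ln|G| - c\ln(s/e) - O(\ln c)$, and it remains to show $c\ln(s/e)\le\epsilon\,\Delta(x)\ln|G|$. Here one uses $\Delta(x)\ln|G|\ge d(\ln r-2)$ (again by Lemma~\ref{Stirling}) together with the constraint coming from the hypothesis: $|x^G|\ge\binom rs$, so $|x^G|\le|G|^\delta$ forces $s\ln(r/s)\le\delta\ln|G|$, which keeps $s$ and $c$ small enough relative to $r$ that the required inequality holds once $r$ is large in terms of $\epsilon$ and $\delta$ is small in terms of $\epsilon$.

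The step I expect to be the genuine obstacle is precisely this last one: establishing, uniformly over all cycle types compatible with $|x^G|\le|G|^\delta$, that the ``centralizer correction'' $\sum_{i\ge2}a_i\ln i+\sum_{i\ge2}\ln a_i!$ is a $(1\pm\epsilon)$-order perturbation of $\Delta(x)\ln|G|$ and not merely a bounded multiple of it — in particular in the regime where the support is an intermediate power of $r$, which is where one has to play both halves of Lemma~\ref{Stirling} against each other on the numerator $r!/a_1!$ and the denominator $\prod_i i^{a_i}a_i!$ and control the error terms carefully.
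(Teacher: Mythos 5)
Your upper bound argument is essentially the paper's: both invoke Lemma~\ref{GaronziMaroti} with the auxiliary parameter $\epsilon/(1+\epsilon)$. For the lower bound, however, the decomposition you chose has a genuine gap, and the specific inequality you reduce to is actually false.

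Concretely, after the chain $\ln(r!/a_1!)\ge \frac{s}{r}\ln r!\ge\frac{d}{r}\ln r!\ge\Delta(x)\ln|G|$ you declare that it remains to show $c\ln(s/e)\le\epsilon\,\Delta(x)\ln|G|$. Take $x$ to be a product of $d=\alpha r$ disjoint transpositions, with $\alpha>0$ a small constant. This is compatible with $|x^G|\le|G|^\delta$ for any $\delta>\alpha$ once $r$ is large, since $|x^{S_r}|\approx (er/2\alpha)^{\alpha r}$ while $|G|^\delta\approx(r/e)^{\delta r}$. Here $c=d=\alpha r$, $s=2\alpha r$, so $c\ln(s/e)=\alpha r(\ln r+\ln(2\alpha)-1)$, whereas $\Delta(x)\ln|G|=\alpha\ln(r!/2)\approx\alpha r(\ln r-1)$. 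The ratio of these two quantities tends to $1$ as $r\to\infty$, so $c\ln(s/e)\le\epsilon\,\Delta(x)\ln|G|$ fails for every $\epsilon<1$ once $r$ is large. The culprit is the inner step $\frac{s}{r}\ln r!\ge\frac{d}{r}\ln r!$: in this example $s=2d$, so you silently give away nearly half the numerator, and the remaining half is of the same order as the ``centralizer correction'' you then try to absorb. If instead you kept $\frac{s}{r}\ln r!$ and compared the \emph{difference} $\frac{s}{r}\ln r!-c\ln(s/e)$ directly with $(1-\epsilon)\frac{d}{r}\ln|G|$ using $d=s-c$, the numbers do work out; but as written the reduction is broken.

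The paper sidesteps this entirely by bounding the centralizer rather than the class: it uses $\prod_i c_i!\le t!$ with $t=r(1-\Delta(x))$ the total number of cycles, and then applies Stirling to $t!$ in the form $t!\le 2(\sqrt{2\pi r})^{\Delta(x)}\bigl(\sqrt{2\pi r}(r/e)^r\bigr)^{1-\Delta(x)}\le 2(\sqrt{2\pi r})^{\Delta(x)}|H|^{1-\Delta(x)}$. Folding $c_1!$ into $t!$ together with the nontrivial $c_i!$'s is exactly the move that avoids the $s$-versus-$d$ loss: the exponent $1-\Delta(x)=t/r$ is produced in one stroke and the residual factor $\prod_{i\ge2}i^{c_i}\le e^{2\Delta(x)r/e}$ is genuinely $o(|H|^{\epsilon\Delta(x)})$, so the budget closes with room to spare. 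I would recommend you either rework your lower bound to keep the $s/r$ factor through to the end, or adopt the paper's centralizer-side estimate.
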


\begin{proof}
Fix $\epsilon > 0$. 

We may assume that $r$, the degree of the alternating group $G$, is sufficiently large. For if $r \leq c$ with a universal constant $c$, then by choosing $\delta$ less than $1/c$ the condition of the lemma implies that $x=1$. The statement is clear for $x=1$. Let us assume that $x \not= 1$.  	
	
Let $\delta_0$ be such that $|x^{G}| = |G|^{\delta_0}$. We may assume that $\delta_{0} > 0$, for otherwise $x=1$. The upper bound of the proposition amounts to showing that $\delta_{0} \leq \Delta(x)(1+ \epsilon)$. For every $\epsilon_1 > 0$ there exists $N_1$ such that whenever $r \geq N_{1}$ then $\Delta(x) > (1 - \epsilon_{1}) \delta_{0}$ by Lemma \ref{GaronziMaroti}. Thus it suffices to choose $\epsilon_1$ such that $1 < (1 - \epsilon_{1})(1+ \epsilon)$. This is the case when $\epsilon_{1} < \epsilon / (1 + \epsilon)$. 

It remains to establish the lower bound of the proposition. We first prove the same statement for the symmetric group $H$ of degree $r$. For each integer $i$ with $1 \leq i \leq r$, let $c_{i}$ be the number of cycles of length $i$ in the disjoint cycle decomposition of $x$. We have 
\begin{equation}
\label{long}	
|C_{H}(x)| = \Big( \prod_{i=1}^{r} c_{i}! \Big) \Big( \prod_{i=1}^{r} i^{c_{i}} \Big) \leq \Big( \sum_{i=1}^{r} c_{i} \Big)! \Big( \prod_{i=2}^{r} i^{c_{i}} \Big) = t! \Big( \prod_{i=2}^{r} i^{c_{i}} \Big),
\end{equation}
where $t$ is the number of cycles in the disjoint cycle decomposition of $x$. Observe that $t = r (1 - \Delta(x))$. This and Lemma \ref{Stirling} give 
\begin{equation}
\label{longer}
\begin{split}	
t! \leq 2 \sqrt{2 \pi t} {\Big(\frac{t}{e}\Big)}^{t} \leq  2 \sqrt{2 \pi r} {\Big(\frac{r}{e}\Big)}^{t} = 2 \sqrt{2 \pi r} {\Big(\frac{r}{e}\Big)}^{r (1 - \Delta(x))} = \\
= 2 {\Big(\sqrt{2 \pi r}\Big)}^{\Delta(x)} {\Big(\sqrt{2\pi r} {\Big(\frac{r}{e}\Big)}^{r} \Big)}^{1 - \Delta(x)}  \leq 2 {\Big( \sqrt{2\pi r} \Big)}^{\Delta(x)} |H|^{1 - \Delta(x)}.
\end{split}
\end{equation}
We have 
\begin{equation}
\label{short}
2 {\Big( \sqrt{2\pi r} \Big)}^{\Delta(x)} \leq |H|^{(\epsilon/2)\Delta(x)}
\end{equation}
for every large enough $r$. By considering the derivative of the function $f(x) = x^{1/x}$, we see that $i^{1/i} \leq e^{1/e}$ for every positive integer $i$. It follows that 
\begin{equation}
\label{i}	
\prod_{i=1}^{r} i^{c_{i}} = \prod_{i=2}^{r} i^{(ic_{i})/i} \leq \prod_{i=2}^{r} e^{ic_{i}/e} = e^{(\sum_{i=2}^{r} i c_{i})/e}.	
\end{equation}
Now $\sum_{i=2}^{r} i c_{i} \leq \sum_{i=2}^{r} 2(i-1)c_{i} = 2 (\sum_{i=1}^{r}(i-1)c_{i}) = 2 \Delta(x) r$. Applying this to (\ref{i}) gives
\begin{equation}
\label{ii}
\prod_{i=1}^{r} i^{c_{i}} \leq e^{2 \Delta(x) r/e} < |H|^{(\epsilon/2) \Delta(x)},
\end{equation}
holding for every sufficiently large $r$. By (\ref{long}), (\ref{longer}), (\ref{short}), and (\ref{ii}), we obtain 
$$|C_{H}(x)| < |H|^{(\epsilon/2)\Delta(x)} \cdot |H|^{1 - \Delta(x)} \cdot |H|^{(\epsilon/2) \Delta(x)} = |H|^{1 - \Delta(x)(1-\epsilon)}.$$ Thus $|H|^{\Delta(x)(1-\epsilon)} < |x^{H}|$. This proves the claim for the symmetric group $H$. 

We proved above that for all $\epsilon_{1} > 0$ there exists $\delta_{1} > 0$ such that if $|x^{H}| \leq |H|^{\delta_1}$, then 
\begin{equation}
\label{star}	
|H|^{\Delta(x)(1-\epsilon_{1})} \leq |x^{H}|.
\end{equation}
We fixed $\epsilon > 0$. Take $\epsilon_{1} = \epsilon/2$ and $\delta < \delta_{1}/2$. Inequality (\ref{star}) gives $|x^{G}| > |H|^{\Delta(x)(1-(\epsilon/2))}/2$, which is at least $|G|^{\Delta(x)(1-\epsilon)}$ for every sufficiently large $r$, by noting that $\Delta(x) \geq 1/r$. This proves the lower bound of the proposition.   
\end{proof}

\section{Bounding conjugacy class sizes in simple classical groups}

The purpose of this section is to extend Proposition \ref{classsize1} for the case when $G$ is a simple classical group. We also record a consequence. 

Let $n \geq 2$ be an integer and $q$ a prime power. Let $G$ be one of the classical groups $\mathrm{L}_{n}^{\pm}(q)$, $\mathrm{PSp}_{n}(q)$ or $\mathrm{P\Omega}_{n}^{\pm}(q)$. Let $V = V_{n}(q^{u})$ be the natural module for the lift of $G$ where $u = 2$ if $G$ is unitary and $u=1$ otherwise. Let $\overline{\mathbb{F}}$ be the algebraic closure of $\mathbb{F}_{q}$ and let $\overline{V} = V \otimes \overline{\mathbb{F}}$. Let $x \in G$ and let $\hat{x}$ be a preimage of $x$ in $\mathrm{GL}(V)$. In \cite{LSS} the support $\nu(x)$ of $x$ is defined to be
$$\nu(x) = \nu_{V,\overline{\mathbb{F}}}(x) = \min \{ \dim [\overline{V}, \lambda \hat{x}] : \lambda \in \overline{\mathbb{F}}^{*} \}.$$ Define $a = a(G)$ to be $1$ if $G = \mathrm{L}_{n}^{\pm}(q)$ and $1/2$ otherwise.

The following is \cite[Proposition 3.4]{LSS}. 

\begin{proposition}
\label{LSS}	
For any $\epsilon > 0$, there exists $\delta > 0$ such that if $x$ is an element of a simple classical group $G$ with $|x^{G}| \leq |G|^{\delta}$, then 
$$q^{(2a-\epsilon)n \nu(x)} \leq |x^{G}| \leq q^{(2a + \epsilon)n \nu(x)}.$$
\end{proposition}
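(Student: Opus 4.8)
The plan is to turn the computation of $|x^{G}|$ into a dimension count in the ambient algebraic group. Let $\hat{\mathbf G}$ be $\mathrm{GL}_{n}$, $\mathrm{Sp}_{n}$, or $\mathrm{SO}_{n}$ over $\overline{\mathbb F}$, with the (split or twisted) Frobenius $F$ for which $\hat{\mathbf G}^{F}=\hat G(q)$ is one of $\mathrm{GL}_{n}(q)$, $\mathrm{GU}_{n}(q)$, $\mathrm{Sp}_{n}(q)$, $\mathrm{SO}_{n}(q)$; then $G$ is the simple quotient of a subgroup of $\hat G(q)$, and $\hat x$ may be taken in $\hat G(q)$. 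Two standard facts are the engine. First, for any finite classical group $H=\hat{\mathbf H}^{F}$ one has $|H|=q^{\dim\hat{\mathbf H}}\cdot\Theta(1)$ with \emph{absolute} implied constants, since the Euler products $\prod_{i}(1\pm q^{-i})$ converge; hence if the reductive part of $C_{\hat{\mathbf G}}(\hat x)$ splits into $k$ simple factors, then $|C_{\hat{\mathbf G}}(\hat x)^{F}|=q^{\dim C_{\hat{\mathbf G}}(\hat x)+O(k)}$. Second, moving between $G$ and $\hat G(q)$ (central extensions and the $\gcd$ factors) changes $\log_{q}|x^{G}|$ by only $O(\log n)$. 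So it suffices to estimate $\dim\hat x^{\hat{\mathbf G}}=\dim\hat{\mathbf G}-\dim C_{\hat{\mathbf G}}(\hat x)$ and to bound the number of simple factors of $C_{\hat{\mathbf G}}(\hat x)$.

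First I would remove trivialities: for $x=1$ both sides equal $1$; and if $n$ is bounded then either $q$ is also bounded (finitely many groups, choose $\delta$ small enough that $|x^{G}|\le|G|^{\delta}$ forces $x=1$) or $q$ is large, so the smallest nontrivial class already exceeds $|G|^{\delta}$. Hence we may assume $x\neq 1$ and $n\geq N(\epsilon)$. Next, a routine lower bound forces the support to be small: $C_{G}(x)$ stabilizes a set of size at most $2$ containing the largest eigenspace $W$ of $\hat x$, whose $G$-orbit has size $\geq q^{c\, n\,\nu(x)}$ provided $\nu(x)\leq n/2$, while $\dim\hat x^{\hat{\mathbf G}}$ is at least a fixed positive fraction of $\dim\hat{\mathbf G}$ once $\nu(x)>n/2$; either way $|x^{G}|\geq q^{c\, n\,\nu(x)}$ for an absolute $c>0$, so $|x^{G}|\leq|G|^{\delta}$ gives $\nu(x)\leq\epsilon n$ for $\delta$ small enough. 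In particular $\nu:=\nu(x)<n/4$.

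The heart of the matter is the centralizer computation under $\nu<n/4$. Write $\nu=n-\max_{\mu}d_{\mu}$, with $d_{\mu}$ the geometric multiplicity of $\mu$ as an eigenvalue of $\hat x$, and fix $\alpha$ with $d_{\alpha}=n-\nu$. The Frobenius- and form-symmetries of the spectrum of $\hat x$, together with $n-\nu>3n/4$, force $\alpha$ to be central in $\hat G(q)$ ($\alpha=1$ for $\mathrm{GL}$, $\alpha^{q+1}=1$ for $\mathrm{GU}$, $\alpha=\pm1$ for $\mathrm{Sp}$ and $\mathrm{SO}$), so after replacing $\hat x$ by $\alpha^{-1}\hat x$ --- which alters neither $|x^{G}|$ nor $C_{\hat{\mathbf G}}(\hat x)$ --- we may assume $\hat x$ has eigenvalue $1$ with geometric multiplicity $n-\nu$, i.e.\ $\operatorname{rank}(\hat x-1)=\nu$. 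This pins the Jordan form of $\hat x$ to the shape ``$(1^{n})$ plus excess at most $\nu$'': eigenvalue $1$ has $n-\nu$ Jordan blocks, at most $\nu$ of its parts exceed $1$, and the other eigenvalues account for at most $\nu$ dimensions in all. Feeding this into the Jordan-type centralizer formulas --- $\dim C_{\mathrm{GL}_{n}}(\hat x)$ is $\sum_{i}(\lambda'_{i})^{2}$ summed over eigenvalues, while $\dim C_{\mathrm{Sp}_{n}}(\hat x)$ and $\dim C_{\mathrm{SO}_{n}}(\hat x)$ are, by the Wall / Liebeck--Seitz formulas, $\tfrac12\big(\sum_{i}(\lambda'_{i})^{2}\pm|\{i:\lambda_{i}\ \text{odd}\}|\big)$ plus the $O(\nu^{2})$ contribution of the non-$\pm1$ eigenspaces --- and using $|\{i:\lambda_{i}\ \text{odd}\}|=n+O(\nu)$, one gets $\dim C_{\hat{\mathbf G}}(\hat x)=(n-\nu)^{2}+O(\nu^{2})$ for $\mathrm{GL}/\mathrm{GU}$ and $\dim C_{\hat{\mathbf G}}(\hat x)=\tfrac12\big((n-\nu)^{2}\pm n\big)+O(\nu^{2})$ for $\mathrm{Sp}/\mathrm{SO}$. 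Subtracting from $\dim\hat{\mathbf G}$ gives, uniformly, $\dim\hat x^{\hat{\mathbf G}}=2a\, n\,\nu(x)+O(\nu(x)^{2})$, the value of $a$ being $1$ for $\mathrm{GL}/\mathrm{GU}$ and $\tfrac12$ for $\mathrm{Sp}/\mathrm{SO}$ precisely because in the latter cases the large eigenspace carries a non-degenerate form, halving both $\dim\hat{\mathbf G}$ and $\dim C_{\hat{\mathbf G}}(\hat x)$. (More concretely: $\hat x=1+R$ with $\operatorname{rank}R=\nu$, and conjugating $\hat x$ moves the image and kernel of $R$, which may be prescribed independently for $\mathrm{GL}/\mathrm{GU}$ but are linked by the form for $\mathrm{Sp}/\mathrm{SO}$.) The same structure bounds the number of simple factors of $C_{\hat{\mathbf G}}(\hat x)$ by $O(\nu)$, so $|C_{\hat{\mathbf G}}(\hat x)^{F}|=q^{\dim C_{\hat{\mathbf G}}(\hat x)+O(\nu)}$, whence $\log_{q}|x^{G}|=\dim\hat x^{\hat{\mathbf G}}+O(\nu+\log n)=2a\, n\,\nu(x)+O(\nu(x)^{2}+\log n)$. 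Since $\nu(x)\leq\epsilon n$ makes $\nu(x)^{2}\leq\tfrac14\epsilon\, n\,\nu(x)$, and $n\geq N(\epsilon)$ together with $\nu(x)\geq1$ makes $\log n\leq\tfrac14\epsilon\, n\,\nu(x)$, we obtain $q^{(2a-\epsilon)n\nu(x)}\leq|x^{G}|\leq q^{(2a+\epsilon)n\nu(x)}$ (run the argument with $\epsilon/2$ if one wants the constants to come out exactly).

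I expect the main obstacle to be the uniform centralizer-dimension bookkeeping across the four classical types: checking in the symplectic and orthogonal cases that the parity count $|\{i:\lambda_{i}\ \text{odd}\}|$ equals $n+O(\nu)$ and cancels against $\dim\hat{\mathbf G}$ so as to leave exactly $n\,\nu(x)+O(\nu(x)^{2})$, and dealing with the characteristic-$2$ orthogonal groups, where centralizers of unipotent elements are controlled by finer invariants of the quadratic form. A secondary nuisance --- the reason for first reducing to $n$ large and $\nu(x)$ small --- is keeping each passage ($\hat G(q)\to G$, component groups, the twist by $\alpha$) inside the $O(\log n)$ and $O(\nu)$ error budget.
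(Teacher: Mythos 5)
The paper does not actually prove Proposition~\ref{LSS}: it is imported verbatim as \cite[Proposition 3.4]{LSS} and used as a black box, so there is no in-paper proof to compare your attempt against. What you have written is an attempt to rederive the cited result from scratch.

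That said, your sketch follows what is essentially the standard route, and as far as I can tell the one taken in \cite{LSS} itself: pass to the ambient algebraic group $\hat{\mathbf G}$, reduce to $\nu(x)\leq\epsilon n$ via a crude lower bound $|x^G|\geq q^{cn\nu(x)}$, twist so that the dominant eigenvalue is $1$, and compute $\dim C_{\hat{\mathbf G}}(\hat x)$ from the Jordan data, absorbing all the error terms ($O(\log n)$ from centers, isogenies and component groups, $O(\nu)$ from the number of reductive factors of the centralizer, $O(\nu^2)$ from the small eigenvalues and the excess Jordan blocks) inside $\epsilon n\nu(x)$. The arithmetic checks out where I looked: in type $\mathrm{GL}$ one gets $\dim C_{\hat{\mathbf G}}(\hat x)=(n-\nu)^2+O(\nu^2)$ and hence $\dim\hat x^{\hat{\mathbf G}}=2n\nu+O(\nu^2)$, consistent with the transvection value $2n-2$; the $\mathrm{Sp}/\mathrm{SO}$ halving via the parity count $n+O(\nu)$ cancelling against $\dim\hat{\mathbf G}=\frac12(n^2\pm n)$ is also correct, and this is precisely where $a=\frac12$ comes from.

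The gaps you flag yourself are the real ones. The characteristic-$2$ orthogonal case genuinely requires separate treatment (the Wall-type centralizer formula you quote is the odd-characteristic one, and quadratic versus alternating bilinear forms matter). The ``routine lower bound'' that forces $\nu(x)\leq\epsilon n$ is stated more confidently than it is justified: the dominant eigenspace $W$ is defined over $\overline{\mathbb F}$, not necessarily over $\mathbb F_{q^u}$, and establishing $|x^G|\geq q^{cn\nu(x)}$ uniformly across the four classical types, both isotropy types of $W$, and both parities of $q$ is a nontrivial case analysis rather than a one-liner. None of this affects the paper, which simply cites the result; they are gaps in your rederivation.
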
  

For $x \in G$ where $G$ is a simple classical group, let $$\Delta(x) = \frac{\nu(x) \cdot 2a \cdot n \cdot \log q}{ \log |G|}.$$ We may now state the main result of this section. 

\begin{proposition}
	\label{classsize11}	
	For all $\epsilon > 0$ there exists $\delta > 0$ such that whenever $G$ is an alternating group or a simple classical group and $x \in G$ with $|x^{G}| \leq |G|^{\delta}$, then 
	$$|G|^{\Delta(x)(1- \epsilon)} \leq |x^{G}| \leq |G|^{\Delta(x)(1+ \epsilon)}.$$
\end{proposition}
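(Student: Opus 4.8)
The plan is to observe that Proposition~\ref{classsize11} is essentially a repackaging of Proposition~\ref{classsize1} and Proposition~\ref{LSS} under a common notation, so the proof should split into the two cases according to the isomorphism type of $G$.

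First I would dispose of the alternating case. If $G$ is an alternating group of degree $r$, then the quantity $\Delta(x)$ defined in Section~2 is exactly the $\Delta(x)$ appearing in Proposition~\ref{classsize11}: there is no conflict of notation because the new displayed formula for $\Delta(x)$ is only given for classical groups. Hence for $G$ alternating the desired double inequality is literally the conclusion of Proposition~\ref{classsize1}, and we simply invoke it with the same $\epsilon$, obtaining a corresponding $\delta$.

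Next I would treat the classical case. Here $\Delta(x) = \nu(x)\cdot 2a\cdot n\cdot\log q/\log|G|$, so that $|G|^{\Delta(x)} = q^{2an\nu(x)}$. Thus the claimed inequality $|G|^{\Delta(x)(1-\epsilon)}\le |x^G|\le |G|^{\Delta(x)(1+\epsilon)}$ reads $q^{2an\nu(x)(1-\epsilon)}\le |x^G|\le q^{2an\nu(x)(1+\epsilon)}$. Proposition~\ref{LSS} gives, for a suitable $\epsilon'>0$ and corresponding $\delta'>0$, the bounds $q^{(2a-\epsilon')n\nu(x)}\le |x^G|\le q^{(2a+\epsilon')n\nu(x)}$ whenever $|x^G|\le|G|^{\delta'}$. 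Since $a\in\{1,1/2\}$ is bounded below by $1/2$, choosing $\epsilon'$ so that $\epsilon'\le 2a\epsilon$ — for instance $\epsilon' = \epsilon$ works because $2a\ge 1$ — yields $2a-\epsilon'\ge 2a(1-\epsilon)$ and $2a+\epsilon'\le 2a(1+\epsilon)$, which gives exactly the two required bounds. Finally, to combine the two cases into a single $\delta$, I would take $\delta$ to be the minimum of the value produced by Proposition~\ref{classsize1} and the value $\delta'$ produced by Proposition~\ref{LSS} for these choices of parameters; then the hypothesis $|x^G|\le|G|^\delta$ suffices in both cases.

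The only point requiring any care — and the closest thing to an obstacle — is bookkeeping the relationship between the additive error $\epsilon'$ in Proposition~\ref{LSS} (which sits on the exponent $2a$ as $2a\pm\epsilon'$) and the multiplicative error $\epsilon$ in the statement to be proved (which multiplies $\Delta(x)$, equivalently multiplies $2a$). Because $2a$ is bounded away from $0$ and above by a constant, passing between the two is immediate, but one should state explicitly that $\epsilon' = \epsilon$ (or $\epsilon' = 2a\epsilon \ge \epsilon$) does the job and that the resulting bounds are uniform over the two infinite families of $a$-values since only $a=1$ and $a=1/2$ occur.
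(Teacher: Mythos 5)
Your proposal is correct and follows essentially the same route as the paper: reduce the alternating case to Proposition~\ref{classsize1}, rewrite $|G|^{\Delta(x)}$ as $q^{2an\nu(x)}$ in the classical case, and invoke Proposition~\ref{LSS} with a small enough additive error (the paper takes $\epsilon_1 < 2a\epsilon$; your choice $\epsilon' = \epsilon \le 2a\epsilon$ is an equivalent instantiation), finishing by taking the minimum of the two resulting $\delta$'s.
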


\begin{proof}
Fix $\epsilon > 0$. We may assume that $G$ is a simple classical group with parameters $n$, $q$ and $a$, by Proposition \ref{classsize1}. Since $|G|^{\Delta(x)} = q^{2a n \nu(x)}$, the conclusion of the proposition is
\begin{equation}
\label{conclusion}	 
q^{2a n \nu(x)(1- \epsilon)} \leq |x^{G}| \leq q^{2a n \nu(x)(1+ \epsilon)}.
\end{equation}
Let $\epsilon_{1} > 0$ be such that $\epsilon_{1} < 2a\epsilon$. Choose $\delta > 0$ for $\epsilon_{1}$ such that Proposition \ref{LSS} is satisfied. Assume that $|x^{G}| \leq |G|^{\delta}$. Then (\ref{conclusion}) follows from Proposition \ref{LSS}.
\end{proof}

We will need the following technical consequence of Proposition \ref{classsize11}.

\begin{corollary}
\label{corollary}	
There exists $\delta > 0$ such that whenever $G$ is a (finite) alternating or simple classical group and $x_{1}, \ldots , x_{k} \in G$ such that $|x_{1}^{G}| \cdots |x_{k}^{G}| \leq |G|^{\delta}$, then there exists $z \in x_{1}^{G} \cdots x_{k}^{G}$ with $\Delta(z) = \Delta(x_{1}) + \ldots + \Delta(x_{k})$. 
\end{corollary}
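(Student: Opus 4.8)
The plan is, in each case, to choose conjugates $y_{i}\in x_{i}^{G}$ ($1\le i\le k$) whose supports occupy disjoint parts of the underlying structure — the point set for $G=A_{r}$, the natural module for a classical $G$ — so that $z=y_{1}\cdots y_{k}$ satisfies $\Delta(z)=\sum_{i}\Delta(y_{i})=\sum_{i}\Delta(x_{i})$. We may discard those $x_{i}$ equal to $1$. The common reduction is that the hypothesis forces $\sum_{i}\Delta(x_{i})$ to be small: fixing $\epsilon=1/2$ and requiring $\delta$ to be at most the value given by Proposition~\ref{classsize11} for this $\epsilon$, each $|x_{i}^{G}|\le\prod_{j}|x_{j}^{G}|\le|G|^{\delta}$ yields $|G|^{\Delta(x_{i})/2}\le|x_{i}^{G}|$, and multiplying over $i$ gives $\tfrac12\sum_{i}\Delta(x_{i})\le\log(\prod_{j}|x_{j}^{G}|)/\log|G|\le\delta$, so $\sum_{i}\Delta(x_{i})\le 2\delta$. (If $G$ has bounded order, a small enough $\delta$ already forces every $x_{i}=1$, and $z=1$ works; so we may assume $G$ is ``large''.)

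\emph{Alternating groups.} Here $\Delta(x_{i})=\tfrac1r\sum_{j\ge 2}(j-1)c_{j}^{(i)}$, with $c_{j}^{(i)}$ the number of $j$-cycles of $x_{i}$, while the support $\mathrm{supp}(x_{i})=\sum_{j\ge 2}jc_{j}^{(i)}\le 2r\,\Delta(x_{i})$ since $j\le 2(j-1)$ for $j\ge 2$; hence $\sum_{i}\mathrm{supp}(x_{i})\le 2r\sum_{i}\Delta(x_{i})\le 4r\delta<r$. Choose pairwise disjoint subsets $B_{1},\dots,B_{k}$ of $\{1,\dots,r\}$ with $|B_{i}|=\mathrm{supp}(x_{i})$ and, for each $i$, a conjugate $y_{i}\in x_{i}^{A_{r}}$ supported on $B_{i}$ (if the $S_{r}$-class of $x_{i}$ splits in $A_{r}$, composing the conjugating permutation with a transposition supported on $B_{i}$ puts $y_{i}$ in the correct $A_{r}$-class). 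The $y_{i}$ commute, so $z=y_{1}\cdots y_{k}\in x_{1}^{G}\cdots x_{k}^{G}$ has cycle type the disjoint union of those of the $y_{i}$, giving $\Delta(z)=\sum_{i}\Delta(x_{i})$.

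\emph{Classical groups.} Write $\nu(x)=n-e(x)$, where $e(x)$ is the largest dimension of an eigenspace of $\hat x$ over $\overline{\mathbb{F}}$; since $\Delta(x)=\nu(x)\cdot 2an\log q/\log|G|$ and $\log|G|=O(n^{2}\log q)$, the bound on $\sum_{i}\Delta(x_{i})$ forces $\sum_{i}\nu(x_{i})=O(n\delta)$, which we make $\le n/3$. Structural step: let $\lambda_{i}$ attain the minimum defining $\nu(x_{i})$, so $E_{i}:=\ker(\lambda_{i}\hat x_{i}-1)$ has dimension $n-\nu(x_{i})>n/2$; an eigenspace of dimension exceeding $n/2$ has no Galois-conjugate eigenspace, so $\lambda_{i}^{-1}\in\mathbb{F}_{q^{u}}$ and $\hat x_{i}$ is the scalar $\lambda_{i}^{-1}$ on $E_{i}$. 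The radical of the form on $E_{i}$ has dimension $\le\dim E_{i}^{\perp}=\nu(x_{i})$, so a complement $U_{i}$ of it in $E_{i}$ is a non-degenerate $\hat x_{i}$-invariant subspace of dimension $\ge n-2\nu(x_{i})$ on which $\hat x_{i}=\lambda_{i}^{-1}$, and $W_{i}:=U_{i}^{\perp}$ is non-degenerate and $\hat x_{i}$-invariant of dimension $\le 2\nu(x_{i})$. Since $\sum_{i}\dim W_{i}\le 2\sum_{i}\nu(x_{i})<n-2$, the classification of finite formed spaces produces an orthogonal decomposition $V=W_{1}'\perp\cdots\perp W_{k}'\perp W_{0}$ with $W_{i}'\cong W_{i}$ (the slack absorbs the discriminant/type condition in the orthogonal case), and by Witt cancellation the element $y_{i}$ acting as $\hat x_{i}|_{W_{i}}$ on $W_{i}'$ and as the scalar $\lambda_{i}^{-1}$ on $(W_{i}')^{\perp}$ is $G$-conjugate to $\hat x_{i}$. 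Now $z=y_{1}\cdots y_{k}$ is block-diagonal for this decomposition, equal to the scalar $\beta:=\prod_{i}\lambda_{i}^{-1}$ on $W_{0}$ and to $(\prod_{j\ne i}\lambda_{j}^{-1})\,\hat x_{i}|_{W_{i}}$ on $W_{i}'$; a direct count then shows that the $\beta$-eigenspace of $\hat z$ has dimension exactly $n-\sum_{i}\nu(x_{i})$, while every other eigenspace has dimension $\le\sum_{i}\dim W_{i}\le 2\sum_{i}\nu(x_{i})\le n-\sum_{i}\nu(x_{i})$. Hence $e(z)=n-\sum_{i}\nu(x_{i})$, i.e.\ $\nu(z)=\sum_{i}\nu(x_{i})$ and $\Delta(z)=\sum_{i}\Delta(x_{i})$.

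The quantitative bookkeeping (turning $\delta$ into the required smallness of $\sum_{i}\Delta(x_{i})$ and $\sum_{i}\nu(x_{i})$ with constants independent of $r,n,q$), the split-class adjustment in $A_{r}$, and the check that $y_{i}\in x_{i}^{G}$ — here one uses that $C_{G}(\hat x_{i})$ contains the isometry group of the large block $U_{i}$, which carries enough determinant / spinor norm to upgrade isometry-conjugacy to $G$-conjugacy, and that $\lambda_{i}^{-1}$ is $\pm1$ or of norm $1$ — are all routine. The genuinely delicate part is the classical case: extracting the non-degenerate $\hat x_{i}$-invariant subspace $W_{i}$ from the invariant $\nu(x_{i})$, and the closing eigenvalue count, which is precisely where the scalars $\lambda_{i}^{-1}$ must be tracked and where the smallness of $\sum_{i}\nu(x_{i})$ relative to $n$ is what prevents $\nu(z)$ from dropping below $\sum_{i}\nu(x_{i})$.
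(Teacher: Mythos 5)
Your proposal takes a genuinely different route from the paper. The paper's proof is short: it chooses $\delta$ so that $|x^G|\le|G|^{\delta}$ forces $\Delta(x)<1/4$, and then iteratively applies \cite[Lemma 3.5]{LSS} (which says two conjugacy classes with $\Delta<1/4$ have a product containing a class whose $\Delta$ is the sum), observing after each step that the resulting class still has size $\le|G|^{\delta}$ and hence $\Delta<1/4$, so the lemma can be applied again. You instead give a direct, constructive proof for all $k$ classes at once, which amounts to re-deriving (a $k$-fold version of) the content of \cite[Lemma 3.5]{LSS}. This is more self-contained but also much more technically demanding, and it has real gaps in the classical case.

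The alternating case you give is essentially complete and correct: the bound $\mathrm{supp}(x_i)\le 2r\Delta(x_i)$, the disjoint-support conjugates, and the split-class fix by a transposition inside $B_i$ all work, and the cycle count shows $\Delta(z)=\sum_i\Delta(x_i)$.

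In the classical case there are concrete problems. First, the whole argument is phrased in terms of a form on $V$, but for $G=\mathrm{PSL}_n(q)$ there is no form; the decomposition $V=U_i\perp W_i$ and the bound $\dim W_i\le\dim E_i^{\perp}=\nu(x_i)$ make no sense there. One can still produce an $\hat x_i$-invariant decomposition $V=U_i\oplus W_i$ with $\hat x_i|_{U_i}$ scalar and $\dim W_i\le 2\nu(x_i)$ using the primary/rational canonical form decomposition (separating the size-one $\lambda_i^{-1}$-blocks), but that is a different argument and you do not give it. Second, and more subtly, even in the presence of a form $E_i$ need not decompose $\hat x_i$-invariantly as $U_i\oplus R_i$ in a way that interacts correctly with Jordan blocks of eigenvalue $\lambda_i^{-1}$ of size $\ge 2$ that straddle $E_i$ and its complement; your choice of $U_i$ as a non-degenerate complement of the radical inside $E_i$ works because $\hat x_i$ is an isometry and a scalar on $E_i$, but this needs to be said and is precisely what fails to transfer to the linear case. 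Third, the formed-space packing step (``an orthogonal decomposition $V=W_1'\perp\cdots\perp W_k'\perp W_0$ with $W_i'\cong W_i$'') and the Witt cancellation that identifies the complementary piece require nontrivial bookkeeping in the orthogonal case — matching Witt types and discriminants, and the char $2$ quadratic-vs-bilinear distinction — which ``the slack absorbs the discriminant/type condition'' does not actually establish. Finally, upgrading isometry-conjugacy to $G$-conjugacy via the determinant/spinor norm of $C(\hat x_i)$ on the large scalar block $U_i$ is plausible and is the standard device, but it is exactly the kind of step one either proves carefully or cites; you do neither. None of these are impossible to fix, and the closing eigenvalue count (that $\nu(z)=\sum_i\nu(x_i)$ given $\sum_i\nu(x_i)\le n/3$) is correct once the construction is in place, but as written the classical case is not a proof. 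The paper avoids all of this by quoting \cite[Lemma 3.5]{LSS}.
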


\begin{proof}
Choose $\delta > 0$ such that whenever $G$ is an alternating group or a simple classical group and $x \in G$ with $|x^{G}| \leq |G|^{\delta}$, then $\Delta(x) < 1/4$. Such a $\delta$ exists by Proposition \ref{classsize11}. 	
	
Let $x_{1}, \ldots , x_{k}$ be elements in an alternating or simple classical group $G$ such that $|x_{1}^{G}| \cdots |x_{k}^{G}| \leq |G|^{\delta}$. For each $i$ with $1 \leq i \leq k$, let $s_{i} = \Delta(x_{i})$. Put $s = \sum_{i=1}^{k} s_{i}$.  	
	
For every $i$ with $1 \leq i \leq k$, the inequality $|x_{i}^{G}| \leq {|G|}^{\delta}$ implies that $s_{i} < 1/4$. Let $i$ and $j$ be two distinct indices from $\{ 1, \ldots , k \}$. We have $|x_{i}^{G} x_{j}^{G}| \leq |x_{i}^{G}||x_{j}^{G}| \leq {|G|}^{\delta}$, $s_{i} < 1/4$ and $s_{j} < 1/4$. Since both $s_i$ and $s_j$ are less than $1/4$, the normal set $x_{i}^{G}x_{j}^{G}$ contains a conjugacy class $y^{G}$ with $y \in G$ and $\Delta(y) = s_{i}+s_{j}$ by \cite[Lemma 3.5]{LSS}, for classical groups $G$. The same statement holds when $G$ is an alternating group. Since $|y^{G}| \leq {|G|}^{\delta}$, we have $s_{i} + s_{j} = \Delta(y) < 1/4$. Continuing in this way, we find that there is an element $z \in G$ such that $z^{G}$ is contained in $x_{1}^{G} \cdots x_{k}^{G}$, and $z$ satisfies $\Delta(z) = s_{1} + \cdots + s_{k} = s$ and $s$ is less than $1/4$. 	
\end{proof}

\section{Lower bounds on conjugacy class sizes in simple groups}

Let $G$ be a non-abelian finite simple group different from a sporadic group. We define the rank of $G$ to be its untwisted Lie rank if it is a group of Lie type and to be its degree if it is an alternating group (and not a group of Lie type). 

\begin{lemma}
	\label{classsize}
	Every non-trivial conjugacy class of a non-abelian finite simple group of rank $r$ has size at least $|G|^{1/16r}$.
\end{lemma}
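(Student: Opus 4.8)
The plan is to handle the two families --- alternating groups and groups of Lie type --- separately, since in both cases one needs only a crude lower bound that degrades linearly in the rank. For an alternating group $G = \mathrm{Alt}(r)$, a non-trivial element $x$ moves at least two points, so it has at least one non-trivial cycle; hence $\Delta(x) \geq 1/r$ (indeed $t \leq r-1$). The point is that small classes are controlled by $\Delta$: by Proposition~\ref{classsize1} (taking $\epsilon = 1/2$, say, which fixes a $\delta$), if $|x^G| \leq |G|^\delta$ then $|x^G| \geq |G|^{\Delta(x)/2} \geq |G|^{1/2r}$, which is comfortably larger than $|G|^{1/16r}$. If instead $|x^G| > |G|^\delta$, then since $\delta$ is a fixed constant and $r$ is at least some absolute constant, $|G|^\delta \geq |G|^{1/16r}$ holds for all large enough $r$; the finitely many small $r$ and the finitely many exceptional classes are dispatched by direct inspection (possibly absorbing a worse constant, or noting that every non-trivial class of $\mathrm{Alt}(r)$ for small $r$ has size bounded below by a fixed power of $|G|$).

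For $G$ a simple group of Lie type of untwisted Lie rank $r$ over $\mathbb{F}_q$, I would invoke the standard fact that the minimal size of a non-trivial conjugacy class of such a $G$ is at least roughly $q^{c}$ for an absolute constant $c$ (for classical groups one sees this from $\nu(x) \geq 1$ together with Proposition~\ref{LSS}, or more elementarily from the fact that the centralizer of any non-central element in $\mathrm{GL}_n$ is a proper subgroup whose index is at least $q-1$; for exceptional groups one uses the analogous bounds on minimal class sizes from the literature). On the other hand $|G| \leq q^{C r^2}$ for an absolute constant $C$ --- the dimension of a simple algebraic group of rank $r$ is $O(r^2)$, and the number of $\mathbb{F}_q$-points grows accordingly, with the Lie rank of a classical group being within a constant factor of $n$. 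Combining $|x^G| \geq q^{c}$ with $\log|G| \leq C r^2 \log q$ gives
$$\log|x^G| \geq c \log q \geq \frac{c}{C r^2}\log|G|,$$
which is even better than $|G|^{1/16r}$ once $r$ is large. The constant $1/16$ is generous enough to absorb the small-rank classical and exceptional cases and the small-$q$ cases by direct verification (e.g.\ in $\mathrm{PSL}_2(q)$ a non-trivial class has size at least $(q-1)/2 \approx |G|^{1/3}$, and $1/3 \geq 1/16$).

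The main obstacle I expect is not any deep inequality but rather bookkeeping: making sure the single absolute constant $\delta$ coming from Proposition~\ref{classsize1}/Proposition~\ref{LSS} (which depends on the chosen $\epsilon$) is compatible with the claimed exponent $1/16r$ uniformly across \emph{all} ranks and \emph{all} $q$, including the boundary regime where $|G|^\delta$ and $|G|^{1/16r}$ are comparable, i.e.\ $r \approx 1/(16\delta)$. In that bounded range of ranks one falls back on the elementary centralizer-index bound: for $G$ classical of rank $r$, a non-central element of the ambient $\mathrm{GL}$ has centralizer of index at least $q-1$ (from the non-trivial eigenvalue structure), and after passing to the simple quotient and accounting for diagonal/field automorphisms the class size is still at least a fixed positive power of $q$, hence at least $|G|^{\Omega(1/r^2)}$, which for bounded $r$ is $\geq |G|^{1/16r}$ provided $q$ is large; the finitely many remaining $(G)$ with both $r$ and $q$ bounded are a finite list checked directly. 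So the proof is a case analysis: large rank handled by the crude dimension count, small rank handled by the elementary centralizer bound plus a finite check, and throughout the small-class regime handled by Propositions~\ref{classsize1} and~\ref{LSS}.
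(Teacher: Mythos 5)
Your argument for groups of Lie type does not prove the stated bound. You write ``the minimal size of a non-trivial conjugacy class of such a $G$ is at least roughly $q^{c}$ for an absolute constant $c$,'' and then combine this with $|G| \leq q^{Cr^2}$ to get $|x^G| \geq |G|^{c/(Cr^2)}$. That exponent is $\Theta(1/r^2)$, not $\Theta(1/r)$, so it does not give $|G|^{1/16r}$. The correct input, which the paper cites as \cite[Proposition 2.2]{GPSz}, is that the minimal non-trivial class size grows \emph{linearly with the rank} in the exponent of $q$: specifically $|x^G| \geq q^{r/2}$ for $G = G_r(q)$. Combined with $|G| \leq q^{8r^2}$, this gives $|x^G| \geq q^{r/2} = \bigl(q^{8r^2}\bigr)^{1/16r} \geq |G|^{1/16r}$ directly. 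You gesture at the right idea when you mention $\nu(x) \geq 1$ together with Proposition~\ref{LSS} (which does give $q^{\Omega(n)}$), but you don't carry it through, you restrict it to the small-class regime $|x^G| \leq |G|^{\delta}$, and your actual displayed computation falls back on the too-weak $q^c$ bound. You also leave the exceptional groups to unspecified ``bounds from the literature,'' whereas \cite{GPSz} covers all types uniformly.

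For the alternating groups your route is correct but needlessly circuitous. Going through Proposition~\ref{classsize1} forces you to split into the regimes $|x^G| \leq |G|^\delta$ and $|x^G| > |G|^\delta$, and then to appeal to ``direct inspection'' for bounded $r$ --- but the inspection is never done, and ``absorbing a worse constant'' would change the statement, which has the specific exponent $1/16r$. The paper's argument is one line and avoids all of this: for $r \geq 5$ the minimal index of a proper subgroup of $\mathrm{Alt}(r)$ is $r$, so every non-trivial class (being the index of a centralizer) has size at least $r$, and $r > r^{1/16} \geq (r!/2)^{1/16r} = |G|^{1/16r}$, uniformly in $r$ with no case split. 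I'd recommend replacing your two-regime argument with this direct centralizer-index bound, and replacing the $q^c$ estimate with the rank-linear bound from \cite{GPSz}.
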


\begin{proof}
	Let $G = G_{r}(q)$ be a finite simple group of Lie type of rank $r$ defined over $\mathbb{F}_{q}$, the finite field of order $q$. Let $x$ be an arbitrary non-trivial element in $G$. We have $$q^{r/2} \leq |x^{G}| \leq |G| \leq q^{8r^{2}}$$ by \cite[Proposition 2.2]{GPSz}. The result follows in this case. Let $G$ be the alternating group of degree $r \geq 5$. Since the minimal index of a proper subgroup of $G$ in $G$ is $r$, every non-trivial conjugacy class of $G$ has size at least $r > r^{1/16} \geq |G|^{1/16r}$.  
\end{proof}

The following is \cite[Theorem 2.2]{LSS}.

\begin{lemma}
	\label{normalset}
	For any $\epsilon > 0$ there exists $N$ such that if $G$ is a non-abelian finite simple group of rank at least $N$ and $B$ is a non-empty normal subset of $G$, then $B$ contains a conjugacy class of $G$ of size at least $|B|^{1 - \epsilon}$. 
\end{lemma}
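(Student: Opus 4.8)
The plan is to reduce Lemma \ref{normalset} to a statement about how many conjugacy classes of $G$ can be small simultaneously, and then to prove that statement using the results of the preceding sections. Write the nonempty normal set $B$ as a disjoint union $B = C_{1} \sqcup \cdots \sqcup C_{m}$ of conjugacy classes, ordered so that $M := |C_{1}|$ is maximal. From $M \leq |B| \leq mM$ one sees that it suffices to prove $m \leq M^{\epsilon}$, since then $|C_{1}| = M \geq |B|^{1/(1+\epsilon)} \geq |B|^{1-\epsilon}$, the last step because $(1+\epsilon)(1-\epsilon) \leq 1$. As every $C_{i}$ has size at most $M$, the lemma follows once we prove the claim: for every $\epsilon > 0$ there is $N$ such that for every non-abelian finite simple group $G$ of rank at least $N$ and every integer $M \geq 1$, the number of conjugacy classes of $G$ of size at most $M$ is at most $M^{\epsilon}$.

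Take $N > 8$, so that $G$ is an alternating group $A_{r}$ or a simple classical group. Two ranges of $M$ are immediate. If $M < |G|^{1/16r}$, then by Lemma \ref{classsize} the trivial class is the only one of size at most $M$, so the claim holds; write henceforth $M = |G|^{s}$ with $s \geq 1/16r$. If $M > |G|^{\delta_{0}}$ for a small fixed $\delta_{0}$ (chosen below), then since the total number $k(G)$ of conjugacy classes is at most $|G|^{o(1)}$ when the rank is large --- $k(A_{r})$ is the partition function $p(r)$, and for classical $G$ of rank $r$ over $\mathbb{F}_{q}$ the number $k(G)$ is a polynomial in $q$ of degree $O(r)$ while $\log_{q}|G|$ has order $r^{2}$ --- we get $k(G) \leq |G|^{\delta_{0}\epsilon} \leq M^{\epsilon}$ for $N$ large. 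So it remains to treat $|G|^{1/16r} \leq M \leq |G|^{\delta_{0}}$.

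Here I would exploit that class size is governed by a single invariant and that few classes keep that invariant small. Let $G$ first be classical with natural module of dimension $n$ (so $n$ grows with the rank), and fix a small $\epsilon_{1}$ with $\delta_{0}$ small enough for Proposition \ref{LSS} applied to $\epsilon_{1}$. Then $|x^{G}| \leq M = |G|^{s}$ forces $\nu(x) = O(sn)$, using that $\log_{q}|G|$ has order $n^{2}$. On the other hand $\nu(x) \leq k$ means that a scalar multiple of $\hat{x}$ acts trivially on a subspace of codimension at most $k$, and there are only $q^{O(k)}$ conjugacy classes of $G$ of this shape (bound the conjugacy classes of $\mathrm{GL}_{j}(q)$ for $j \leq k$, absorbing the bounded splitting into $G$-classes and the centre). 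Hence the number of classes of size at most $M$ is $q^{O(sn)}$, which lies below $M^{\epsilon} = q^{\epsilon s \log_{q}|G|}$ once $n$ is large and $\delta_{0}$ is small, the constraint $s \geq 1/16r$ keeping the additive constants under control. If instead $G = A_{r}$, the relevant invariant is $\Delta(x)$: Proposition \ref{classsize1} (with $\delta_{0}$ small) turns $|x^{G}| \leq M = |G|^{s}$ into $\Delta(x) \leq 2s$, while a conjugacy class of $A_{r}$ with $\Delta(x) \leq \Delta_{0}$ determines, via the lengths (each reduced by one) of the non-trivial cycles of the element, a partition of $\Delta_{0} r$, so there are at most $p(\lfloor \Delta_{0} r \rfloor)^{O(1)} \leq e^{O(\sqrt{\Delta_{0} r})}$ of them. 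Thus the number of classes of size at most $M$ is $e^{O(\sqrt{sr})}$, which lies below $M^{\epsilon} = |G|^{\epsilon s} \geq (r/e)^{\epsilon s r}$ (using $|A_{r}| \geq (r/e)^{r}$, Lemma \ref{Stirling}) for $r$ large. Assembling the three ranges of $M$ and the two families of $G$ proves the claim, hence the lemma.

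The crux is the claim, and within it the uniformity: one must keep the approximation parameters (the $\epsilon_{1}$ and $\delta_{0}$ feeding Propositions \ref{LSS} and \ref{classsize1}, and the implied constants in the class counts) consistent across the alternating and classical families and across the full range $1 \leq M \leq |G|$, with the two degenerate ends --- $M$ tiny, killed by Lemma \ref{classsize}, and $M$ comparable to $|G|$, handled by the crude bound on $k(G)$ --- treated apart. The substantive inputs are Propositions \ref{classsize1} and \ref{LSS} (class size controlled by $\Delta$, respectively by $\nu$), the elementary counts of classes with small $\Delta$ or small $\nu$, and the lower bound of Lemma \ref{classsize}.
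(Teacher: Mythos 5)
The paper does not prove Lemma \ref{normalset} at all: it is quoted verbatim as \cite[Theorem 2.2]{LSS}, with no argument supplied. What you wrote is therefore a reconstruction of the cited proof rather than an alternative to one, and it is worth noting that your route --- reduce the lemma to the assertion that $G$ has at most $M^{\epsilon}$ conjugacy classes of size at most $M$, then establish that count by observing that class size is governed by a single small invariant ($\nu$ in the classical case, $\Delta$ in the alternating case) while few classes keep that invariant small --- is essentially the route Liebeck, Schul, and Shalev themselves take, the class-counting statement going back to Liebeck--Shalev \cite{LS}. Your reduction $|B|\leq mM\leq M^{1+\epsilon}$, hence $M\geq |B|^{1/(1+\epsilon)}\geq|B|^{1-\epsilon}$, is correct; the three ranges of $M$ are the right split; and the lower bound $s\geq 1/16r$ from Lemma \ref{classsize} is exactly the cushion needed to absorb the additive constants in the middle range. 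Two of your inputs, however, are only gestured at and are themselves substantial: the estimate that the number of $G$-classes of a classical group with $\nu(x)\leq k$ is $q^{O(k)}$ with an absolute implied constant (uniform over all classical types and $q$, and surviving the passage from $\mathrm{GL}$-classes through scalars, forms, and the centre to $G$-classes), and the bound $k(G)=|G|^{o(1)}$ as the rank grows. Both are true and standard, but they are theorems, not asides, and in a self-contained write-up they need either a careful canonical-form argument or a citation (again essentially to \cite{LS} or to the class-counting literature). Once those are properly sourced, the whole argument in effect folds back into the citation the paper already gives; what your version buys is a transparent account of how the quoted theorem is obtained from the propositions in the paper together with the class-counting results, which is illuminating but not a shortcut.
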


We are in position to prove the following result. 

\begin{proposition}
\label{log}
For any $\epsilon > 0$ there exists $\delta > 0$ such that whenever $B_{1}, \ldots , B_{k}$ are non-empty normal subsets in a non-abelian finite simple group $G$ with $$|B_{1}| \cdots |B_{k}| \leq |G|^{\delta},$$ then there exists $z \in B_{1} \cdots B_{k}$ such that 
$$|z^{G}| \geq (|B_{1}| \cdots |B_{k}|)^{1-\epsilon}.$$  
\end{proposition}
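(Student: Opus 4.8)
The plan is to reduce to the already-established results on conjugacy classes by separating the analysis according to the rank of $G$. First I would fix $\epsilon > 0$ and let $N$ be the bound from Lemma~\ref{normalset} associated to some small auxiliary parameter $\epsilon_1$ to be pinned down later. The proof splits into two regimes: groups of bounded rank (rank less than $N$, together with the finitely many sporadic groups) and groups of large rank (rank at least $N$).

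\emph{Bounded rank.} Here I would use Lemma~\ref{classsize}: every non-trivial conjugacy class of a simple group of rank $r < N$ has size at least $|G|^{1/16r} \geq |G|^{1/16N}$. On the other hand, since each $B_i$ is non-empty, either some $B_i$ contains a non-trivial class, in which case the product $B_1 \cdots B_k$ contains a corresponding class of size at least $|G|^{1/16N}$, or every $B_i = \{1\}$, in which case $|B_1| \cdots |B_k| = 1$ and the conclusion is trivial. If $\delta$ is chosen small enough that $|G|^{\delta} \leq |G|^{1/16N}$ — i.e.\ $\delta \leq 1/16N$ — then $(|B_1| \cdots |B_k|)^{1-\epsilon} \leq |G|^{\delta(1-\epsilon)} \leq |G|^{1/16N}$, so the large non-trivial class we found already witnesses the bound. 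So in the bounded-rank case only a constraint on $\delta$ in terms of $N$ is needed; no use of the $\Delta$ machinery is required.

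\emph{Large rank.} Here $G$ is alternating or classical of rank at least $N$ (groups of large rank cannot be exceptional). For each $i$, apply Lemma~\ref{normalset} with parameter $\epsilon_1$ to extract a conjugacy class $x_i^G \subseteq B_i$ with $|x_i^G| \geq |B_i|^{1-\epsilon_1}$; note $|x_i^G| \leq |B_i| \leq |G|^{\delta}$, and in particular $|x_1^G| \cdots |x_k^G| \leq |G|^{\delta}$. Now invoke Corollary~\ref{corollary} (shrinking $\delta$ if necessary to meet its hypothesis): there is $z \in x_1^G \cdots x_k^G \subseteq B_1 \cdots B_k$ with $\Delta(z) = \Delta(x_1) + \cdots + \Delta(x_k)$. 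Next use Proposition~\ref{classsize11} in both directions: from the lower bound, $|z^G| \geq |G|^{\Delta(z)(1-\epsilon_2)} = |G|^{(1-\epsilon_2)\sum_i \Delta(x_i)}$; from the upper bound applied to each $x_i$, $|x_i^G| \leq |G|^{\Delta(x_i)(1+\epsilon_2)}$, hence $\Delta(x_i) \geq \frac{\log |x_i^G|}{(1+\epsilon_2)\log|G|}$. Chaining these and using $|x_i^G| \geq |B_i|^{1-\epsilon_1}$ gives
\begin{equation*}
|z^G| \geq |G|^{\frac{1-\epsilon_2}{1+\epsilon_2} \sum_i \frac{\log |x_i^G|}{\log |G|}} = \Big( \prod_i |x_i^G| \Big)^{\frac{1-\epsilon_2}{1+\epsilon_2}} \geq \Big( \prod_i |B_i| \Big)^{(1-\epsilon_1)\frac{1-\epsilon_2}{1+\epsilon_2}},
\end{equation*}
and it remains only to choose $\epsilon_1, \epsilon_2$ small enough (hence $N = N(\epsilon_1)$ and $\delta = \delta(\epsilon_1,\epsilon_2)$) that $(1-\epsilon_1)\frac{1-\epsilon_2}{1+\epsilon_2} \geq 1 - \epsilon$.

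\emph{Main obstacle.} The delicate point is bookkeeping the choice of constants so that all three inputs — Lemma~\ref{normalset}, Corollary~\ref{corollary}, and Proposition~\ref{classsize11} — are simultaneously applicable: Lemma~\ref{normalset} fixes $N$ from $\epsilon_1$, while Corollary~\ref{corollary} and Proposition~\ref{classsize11} demand their own (possibly smaller) $\delta$'s, and $\delta$ must also respect the bounded-rank constraint $\delta \leq 1/16N$. One must also check that the product of the extracted class sizes genuinely stays below $|G|^{\delta}$ so that Corollary~\ref{corollary} applies, which follows since each $|x_i^G| \leq |B_i|$ and the $|B_i|$ already multiply to at most $|G|^{\delta}$. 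Everything else is a routine chase through the stated inequalities.
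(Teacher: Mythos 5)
Your proposal is correct and follows essentially the same route as the paper: dispose of bounded rank via Lemma~\ref{classsize}, and in the large-rank case extract a large class from each $B_i$ via Lemma~\ref{normalset}, combine them with Corollary~\ref{corollary} to get $z$ with additive $\Delta$, and then apply Proposition~\ref{classsize11} in both directions to conclude. The only cosmetic differences are that the paper first dispenses separately with the (finitely many) groups of bounded order before the bounded-rank case, and it phrases the final $\epsilon$-bookkeeping via an intermediate target inequality $|z^G|\ge(|x_1^G|\cdots|x_k^G|)^{1-\epsilon/2}$, but the underlying estimate $(1-\epsilon_1)\frac{1-\epsilon_2}{1+\epsilon_2}\ge 1-\epsilon$ is exactly the same.
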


\begin{proof}
Fix $\epsilon > 0$. We may assume that $\epsilon < 1$. Let $G$ be a non-abelian finite simple group. Let $k$ be a positive integer and let $B_{1}, \ldots , B_{k}$ be non-empty normal subsets in $G$. For each $i$ with $1 \leq i \leq k$, let $x_{i}$ be a member of a largest conjugacy class in $B_{i}$. We may assume that each $x_{i}$ is different from $1$. 

Assume first that $|G|$ is bounded from above by a constant $c$. If $\delta$ is chosen to be less than $1/c$, then $|G|^{\delta} < 2$, and the statement is clear. Thus from now on we may assume that $|G|$ is unbounded. In particular, we assume that $G = G_{r}(q)$ is a finite simple group of Lie type of rank $r$ defined over $\mathbb{F}_{q}$, the finite field of order $q$, or $G$ is the alternating group of degree $r \geq 5$.

Assume first that $r$ is bounded from above by a constant $c$. If $\delta$ is chosen to be less than $1/16c$, then the statement follows from Lemma \ref{classsize}. Thus from now on we may assume that $r$ is sufficiently large, that is, $G$ is a finite simple classical group whose lift acts naturally on a vector space of large enough dimension, or $G$ is the alternating group of large enough degree.   
 
We may assume by Lemma \ref{normalset} that for every $i$ with $1 \leq i \leq k$ we have $|x_{i}^{G}| \geq |B_{i}|^{1-\epsilon_1}$ for any fixed $\epsilon_{1} > 0$. If there exists $z \in x_{1}^{G} \cdots x_{k}^{G}$ such that 
\begin{equation}
\label{reduce}	
|z^{G}| \geq (|x_{1}^{G}| \cdots |x_{k}^{G}|)^{1-(\epsilon/2)}, 
\end{equation}
then $$|z^{G}| \geq (|B_{1}| \cdots |B_{k}|)^{(1 - \epsilon_{1})(1-(\epsilon/2))} \geq (|B_{1}| \cdots |B_{k}|)^{1-\epsilon}$$ whenever $\epsilon_{1}$ is chosen such that $\epsilon_{1} \leq \epsilon/(2 - \epsilon)$.   
 
In the rest of the proof we will find an element $z \in x_{1}^{G} \cdots x_{k}^{G}$ such that (\ref{reduce}) holds.  
 

We may assume that $|x_{1}^{G}| \cdots |x_{k}^{G}| \leq |G|^{\delta_{1}}$ where $\delta_{1}$ is a constant whose existence is assured by Corollary \ref{corollary}. Let $z \in x_{1}^{G} \cdots x_{k}^{G}$ such that $\Delta(z) = \sum_{i=1}^{k} \Delta(x_{i})$. For each $i$ with $1 \leq i \leq k$, let $s_{i} = \Delta(x_{i})$. Put $s = \sum_{i=1}^{k} s_{i}$. 

Let $\epsilon_{2} > 0$ be such that $\epsilon_{2} < \epsilon / (4 - \epsilon)$. Let $\delta_{2} > 0$ be a constant whose existence is assured by Proposition \ref{classsize11} for $\epsilon_{2}$. Let $\delta$ be the minimum of $\delta_{1}$ and $\delta_{2}$. On one hand Proposition \ref{classsize11} gives 
\begin{equation}
	\label{e22}
	|z^{G}| \geq |G|^{(1-\epsilon_{2})s}
\end{equation}
and on the other,
\begin{equation}
	\label{e33}
	|x_{1}^{G}| \cdots |x_{k}^{G}| \leq |G|^{(1 + \epsilon_{2})\sum_{i=1}^{k} s_{i}} = |G|^{(1 + \epsilon_{2})s}.
\end{equation}
Finally, inequality (\ref{reduce}) is satisfied since $(1-\epsilon_{2})s > (1 + \epsilon_{2})s(1 - (\epsilon/2))$.
\end{proof}

\section{Proof of Theorem \ref{main}}

Gill, Pyber, Short, Szab\'o \cite[Theorem 4.3]{GPSSz} proved the following important result.

\begin{proposition}
	\label{GPSSz}
	Let $A$ and $B$ be finite sets in a group $G$ with $B$ normal in $G$. Suppose that $|AB| \leq K |A|$ for some positive number $K$. Then there exists a nonempty subset $X$ of $A$ such that $|X B^{k}| \leq K^{k} |X|$ for $k \geq 1$. In particular, $|B^{2}| \leq K|B|$ implies that $|B^{k}| \leq K^{k}|B|$ for $k \geq 1$.
\end{proposition}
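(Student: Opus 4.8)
The plan is to prove this by the Pl\"{u}nnecke--Ruzsa method in the form introduced by Petridis, taking advantage of the one feature of the hypothesis that makes such an argument survive outside abelian groups: since $B$ is normal we have $gB=Bg$ for every $g\in G$, and hence $CB^{j}=B^{j}C$ for every subset $C\subseteq G$ and every $j\ge 0$, so $B$ behaves like a quasi-central object. I would first reduce to producing a \emph{single} witness set. Let $K'=\min\{\,|XB|/|X| : \emptyset\neq X\subseteq A\,\}$ and fix a nonempty $X\subseteq A$ attaining it; since $A$ itself competes, $K'\le |AB|/|A|\le K$. It then suffices to prove $|XB^{k}|\le (K')^{k}|X|$ for all $k\ge 1$.

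The core of the matter is a Petridis-type lemma: \emph{for every finite $C\subseteq G$ one has $|XBC|\le K'|XC|$}. Granting this, the theorem is immediate: applying the lemma with $C=B^{k-1}$ gives $|XB^{k}|=|XB\,B^{k-1}|\le K'|XB^{k-1}|$, and iterating down to $|XB|=K'|X|$ yields $|XB^{k}|\le (K')^{k}|X|\le K^{k}|X|$, which is the asserted bound for this $X$.

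I would prove the lemma by induction on $|C|$, the cases $|C|\le 1$ being immediate from the definition of $K'$. For the inductive step, write $C=C_{0}\cup\{c\}$ with $c\notin C_{0}$ and set $X'=\{x\in X : xBc\subseteq XBC_{0}\}$. Since $X'Bc\subseteq XBc\cap XBC_{0}$, the new part of $XBc$ satisfies $|XBc\setminus XBC_{0}|\le |XB|-|X'B|\le K'|X|-K'|X'|=K'|X\setminus X'|$, the middle inequality being the minimality of $K'$ applied to $X'\subseteq A$ (and trivial when $X'=\emptyset$); as $XBC=XBC_{0}\sqcup(XBc\setminus XBC_{0})$, the inductive hypothesis $|XBC_{0}|\le K'|XC_{0}|$ now gives $|XBC|\le K'|XC_{0}|+K'|X\setminus X'|$. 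In the other direction, set $X''=\{x\in X : xc\in XC_{0}\}$, so that $|XC|=|XC_{0}|+|Xc\setminus XC_{0}|=|XC_{0}|+|X\setminus X''|$; normality of $B$ forces $X''\subseteq X'$, since $xc=x_{0}c_{0}$ implies $xBc=xcB=x_{0}c_{0}B=x_{0}Bc_{0}\subseteq XBC_{0}$, whence $|XC|\ge |XC_{0}|+|X\setminus X'|$. Combining the two estimates yields $|XBC|\le K'|XC|$. The only use of normality is the containment $X''\subseteq X'$, which is precisely what replaces commutativity in the classical argument, and I expect the genuinely delicate point to be exactly this inductive step: one must bound the new part of $XBc$ directly, controlling the overlap $XBc\cap XBC_{0}$ from below via $|X'B|\ge K'|X'|$, rather than attempting to bound $|(X\setminus X')B|$ from above, where minimality would run the wrong way.

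Finally, the ``in particular'' clause is the special case $A=B$ of what has been proved, combined with a routine application of the Ruzsa covering lemma (or of the Ruzsa triangle inequality) to the witness $X\subseteq B$ in order to pass from $|XB^{k}|\le K^{k}|X|$ to $|B^{k}|\le K^{k}|B|$; I do not expect this last step to cause any difficulty.
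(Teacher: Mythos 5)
Your Petridis-style argument is correct and is essentially the approach taken in the cited source \cite[Theorem 4.3]{GPSSz}; the paper under review does not reproduce a proof, it only quotes the result. Your reduction to a minimizer $X$, the key lemma $|XBC|\le K'|XC|$, and the inductive step with $X'=\{x\in X: xBc\subseteq XBC_0\}$ and $X''=\{x\in X: xc\in XC_0\}$ all check out; the one place normality is used, namely $X''\subseteq X'$ via $Bc=cB$, is exactly right, and you correctly handle the degenerate case $X'=\emptyset$. One small simplification: the ``in particular'' clause needs neither the Ruzsa covering lemma nor the Ruzsa triangle inequality. Taking $A=B$ produces a nonempty $X\subseteq B$ with $|XB^k|\le K^k|X|$; choosing any $x\in X$ gives $|B^k|=|xB^k|\le|XB^k|\le K^k|X|\le K^k|B|$, since left multiplication by $x$ is injective. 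This is the intended trivial step, and invoking Ruzsa machinery there, while harmless, suggests more work than is actually required.
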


\begin{proof}[Proof of Theorem \ref{main}]
Fix $\epsilon > 0$. We may assume that $\epsilon < 1$. Choose $\delta_{1}$ satisfying the statement of Proposition \ref{log} with $\epsilon/2$. Let $\delta = (\epsilon/2) \cdot (1 + (\epsilon/2))^{-1} \delta_{1}$. Let $G$ be a non-abelian finite simple group. Let $B$ be a normal subset in $G$ and let $A$ be a subset of $G$, both of size at most $|G|^{\delta}$. The result is clear if $B=1$. Thus assume that $B \not= 1$. Let $k$ be the smallest positive integer for which $|A| \leq |B|^{(\epsilon/2)k}$. Then $|B|^{(\epsilon/2)(k-1)} \leq |A|$ and so 
\begin{equation}
\label{ee11}	
|B|^{(\epsilon/2)k} \leq |A| |B|^{\epsilon/2} \leq |G|^{\delta} |G|^{\delta (\epsilon/2)} = |G|^{(1+(\epsilon/2))\delta} = |G|^{(\epsilon/2)\delta_{1}}. 
\end{equation}

Let $K >0$ be the number defined by $|AB| = K |A|$. Let $X$ be a subset of $A$ whose existence is assured by Proposition \ref{GPSSz}. We get 
\begin{equation}
\label{eee111}	
|B^{k}| \leq |XB^{k}| \leq K^{k} |X| \leq K^{k} |A| \leq K^{k} |B|^{(\epsilon/2)k}
\end{equation}
by Proposition \ref{GPSSz}. We have 
\begin{equation}
\label{eeee1111}	
|B^{k}| \geq |B|^{(1-(\epsilon/2))k}
\end{equation}
by (\ref{ee11}) and Proposition \ref{log}. Inequalities (\ref{eee111}) and (\ref{eeee1111}) provide $|B|^{(1-(\epsilon/2))k} \leq K^{k}|B|^{(\epsilon/2)k}$, and so $K \geq |B|^{1 - \epsilon}$. The result follows.  
\end{proof}


\begin{thebibliography}{30}

\bibitem{Feller} W. Feller, An introduction to probability theory and its applications, Vol. 1, 3rd ed., Wiley, New York, 1968.

\bibitem{GaronziMaroti} M. Garonzi, A. Mar\'oti, Alternating groups as products of four conjugacy classes. \emph{Arch. Math. (Basel)} \textbf{116} (2021), no. 2, 121--130.

\bibitem{GPSz} N. Gill, L. Pyber, E. Szab\'o, A generalization of a theorem of Rodgers and Saxl for simple groups of bounded rank. \emph{Bull. Lond. Math. Soc.} \textbf{52} (2020), no. 3, 464--471.

\bibitem{GPSSz} N. Gill, L. Pyber, I. Short, E. Szab\'o, On the product decomposition conjecture for finite simple groups.
\emph{Groups Geom. Dyn.} \textbf{7} (2013), no. 4, 867--882. 

\bibitem{LNS} M. W. Liebeck, N. Nikolov, A. Shalev, Product decompositions in finite simple groups. \emph{Bull. Lond. Math. Soc.} \textbf{44} (2012), no. 3, 469--472.

\bibitem{LSS} M. W. Liebeck, G. Schul, A. Shalev, Rapid growth in finite simple groups. \emph{Trans. Amer. Math. Soc.} \textbf{369} (2017), no. 12, 8765--8779.

\bibitem{LS} M. W. Liebeck, A. Shalev, Diameters of finite simple groups: sharp bounds and applications. \emph{Ann. of Math. (2)}, \textbf{154} (2001), 383--406.

\bibitem{Shalev} A. Shalev, Word maps, conjugacy classes, and a noncommutative Waring-type theorem. \emph{Ann. of Math. (2)}, \textbf{170} (2009), 1383--1416.

\end{thebibliography}
\end{document}